\theoremstyle{theorem}
\newtheorem{theorem}{Theorem}
\newtheorem{proposition}[theorem]{Proposition}
\newtheorem{lemma}[theorem]{Lemma}
\theoremstyle{definition}
\newtheorem{example}[theorem]{Example}
\newtheorem{remark}[theorem]{Remark}
\def\lock{\includegraphics[scale=1]{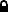}}
\def\unlock{\includegraphics[scale=1]{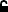}}
\newcommand{\IR}{\mathbb{R}}
\newcommand{\IN}{\mathbb{N}}
\newcommand{\bsone}{\mathbf{1}}
\newcommand{\bsonei}{\mathbf{1}_{I_j}}
\newcommand{\ZZ}{\mathbb{Z}}
\newcommand{\positive}{\Delta_+}
\newcommand{\negative}{\Delta_-}
\newcommand{\const}{\Delta_0}
\newcommand{\Deltafx}{\Delta f(i)}
\newcommand{\fun}{f_{\unlock}}
\newcommand{\flo}{f_{\lock}}
\newcommand{\fdiff}{f^{\rm diff}}
\newcommand{\Deltafdx}{\Delta f^{\rm diff}(i)}
\newcommand{\dd}{\mathrm{d}}
\DeclareMathOperator{\minimize}{\minimize}
\title{Unlocking Your Bike the Easy Way}
\author{Mathias Sonnleitner}
\address{Faculty of Computer Science and Mathematics, University of Passau, \\ 94032 Passau, Germany}
\email{mathias.sonnleitner@uni-passau.de}
\subjclass[2020]{
00A08,
90C27 
}
\keywords{discrete optimization, nonlinear approximation, total variation}
\date{\today}
\begin{document}

\begin{abstract}
Combination locks are widely used to secure bicycles. We consider a combination lock consisting of adjacent rotating dials with the first nonnegative integers printed on each of them. Assuming that we know the correct combination and we start from an incorrect combination, what is the minimal number of steps to arrive at the correct combination if in each step we are allowed to turn an arbitrary number of adjacent dials once in a common direction? We answer this question using elementary methods and show how this is related to a variation of (multivariate) functions.
\end{abstract}

\maketitle

\section{Introduction.}

Let $n$ and $N$ be positive integers (i.e., in $\IN$) and consider a combination lock with $n$ dials, each showing the numbers $0,1,\dots,N-1$. By rotating the dials any combination 
\begin{equation} \label{eq:combination}
	f=\big(f(1),\dots,f(n)\big),\quad\text{where }f(i)\in [0,N-1]\text{ for every }i\in [n],
\end{equation}
can be transformed into any other. Here, we used $[m]=\{1,2,\dots,m\}$ for $m\in\IN$ and $[0,m]=\{0,1,\dots,m\}$ for $m\in\IN\cup \{0\}$. 

In order to reach the known correct combination $\fun$ starting from an incorrect combination $\flo$, we rotate dials repeatedly into one of two possible directions. Assuming that rotating any number of adjacent dials once into a common direction counts as one rotation, what is the minimal number of rotations needed to turn $\flo$ into $\fun$? 
\begin{figure}[h]
	\begin{center}
		\includegraphics[scale=1]{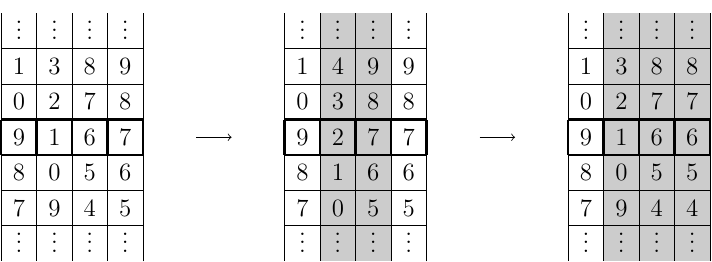}
	\end{center}
	\caption{\small{An inefficient way to turn combination $(9,1,6,7)$ into $(9,1,6,6)$ with rotated dials shaded in grey. The second combination is obtained from the first by rotating the second and the third dial downwards and the third combination is obtained from the second by rotating the second, third, and fourth dials upwards.}}
	\label{fig:example}
\end{figure}

Aside from the stackoverflow-post \cite{Leo13}, where an unsolved instance of this question with $N=26$ appears, we could not find any reference to this very intuitive problem. To state and solve it in a formal way requires some notation which we now introduce. 

We identify a rotation with an interval of dials $[a,b]=\{i\in [n]\colon a\le i \le b\}$, where $a,b\in [n]$, and a direction $\varepsilon\in \{1,-1\}$. Applying such a rotation to a combination $f$ as in display \eqref{eq:combination} yields the combination
\[
f'=\big(f'(1),\dots,f'(n)\big),\quad\text{where }
f'(i)=
f(i)+\varepsilon\bsone_{[a,b]}(i)\text{ mod } N, \quad i\in [n],
\]
where the indicator $\bsone_{[a,b]}(i)$ evaluates to one if $i\in [a,b]$ and to zero otherwise. Thus, depending on the direction $\varepsilon$, the digits of $f$ belonging to $[a,b]$ are either increased or decreased by one and then reduced modulo $N$. 

By using a suitable set of rotations, applied one after the other in any order, one can turn $\flo$ into any other combination of the form \eqref{eq:combination}. Such a set of rotations can be identified with a choice of $m\in\IN$ different intervals of dials $I_j=[a_j,b_j]$, $j\in [m]$, where $a_j,b_j\in [n]$, and numbers $c_j\in \ZZ$ which count how often exactly the dials in $I_j$ are rotated in total. Here, we subtract rotations in a negative direction from those in a positive direction and interpret a positive (negative) $c_j$ as $|c_j|$ rotations in positive (negative) direction. This ensures that rotations of the same dials in opposite directions cancel each other.

To answer the question stated above, we have to determine the minimal total number $\sum_{j=1}^{m}|c_j|$ of rotations needed to turn $\flo$ into $\fun$, that is, the solution to the following discrete optimization problem.

\begin{align} \label{eq:P1}
\text{Minimize}& \quad \sum_{j=1}^{m}|c_j|\nonumber\\
\text{subject to}& \quad \fun(i)\equiv \flo(i)+\sum_{j=1}^{m}c_j \bsone_{[a_j,b_j]}(i)\quad (\mathrm{mod}\, N), \quad i\in [n]\nonumber\\
&\quad m\in\IN, a_j,b_j\in [n] \text{ and }c_j\in\ZZ, \quad j\in [m].\tag{P1}
\end{align}

We can and do assume that $|c_j|\le N$ for all $j\in [m]$ since rotating the same dials $N$ times in the same direction has no effect. Thus, problem \eqref{eq:P1} admits a finite set of feasible solutions and may be solved through exhaustive search given $n,N,\fun$ and $\flo$. Instead of using more advanced techniques from discrete optimization, see, e.g., \cite{PR88} for an overview, we present an illustrative elementary approach to solve \eqref{eq:P1}.

The solution of \eqref{eq:P1} and thus the answer to our question will depend only on the difference between $\fun$ and $\flo$ which is the combination
\[
\fdiff(i)=\fun(i)-\flo(i) \mod N,\quad \text{}i\in [n],
\]
and in fact only on its jumps which are measured by its (forward) difference
\[
\Delta \fdiff\colon [0,n]\to \ZZ,\quad 
i\mapsto
\fdiff(i+1)-\fdiff(i),
\]
where we extended $\fdiff$ by setting $\fdiff(0)=\fdiff(n+1)=0$. Define the sets 
\[
\positive=\{i\in [0,n]\colon \Delta \fdiff(i)>0\}\quad \text{and}\quad
\negative=\{i\in [0,n]\colon \Delta \fdiff(i)<0\},
\]
of upward jumps and downward jumps which are both nonempty. We enumerate these nonincreasingly according to the size of the jump, i.e.,
\[
\positive=\{i_1^{+},\dots,i_{\#\positive}^+\}\quad\text{such that}\quad|\Delta \fdiff(i_1^+)|\ge \cdots \ge |\Delta \fdiff(i_{\#\positive}^+)|,
\]
where $\#\positive$ denotes the cardinality of $\positive$ and for $\negative$ we replace $+$'es by $-$'es. By summing over all jumps and grouping large upward with large downward jumps we obtain the following statement on the minimal number of rotations.

\begin{theorem} \label{thm:main}
The optimal value in \eqref{eq:P1} is given by
\begin{equation} \label{eq:formula}
	\frac{1}{2}\sum_{i=0}^{n}|\Delta \fdiff(i)|-\sum_{k=1}^{K}\big( |\Delta \fdiff(i_{k}^+)|+|\Delta \fdiff(i_{k}^-)|-N\big)_+,
\end{equation}
where $K= \min\{\#\positive,\#\negative\}$ and $x_+=\max\{x,0\}$.
\end{theorem}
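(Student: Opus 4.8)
The plan is to reduce \eqref{eq:P1} to a purely one-dimensional optimization over integer step functions and then to exploit the modular freedom. First I would record the elementary fact that for any integer-valued $h$ on $[0,n+1]$ with $h(0)=h(n+1)=0$, the minimal cost $\sum_j|c_j|$ of writing $h=\sum_j c_j\bsone_{[a_j,b_j]}$ equals $\tfrac12\sum_{i=0}^n|\Delta h(i)|$, i.e.\ half the total variation. The lower bound is immediate: each indicator $\bsone_{[a_j,b_j]}$ contributes $c_j$ to $\Delta h(a_j-1)$ and $-c_j$ to $\Delta h(b_j)$, so the triangle inequality gives $\sum_i|\Delta h(i)|\le 2\sum_j|c_j|$. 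For the matching upper bound I would peel off one top layer at a time: if $\max_i h(i)=M\ge 1$, subtracting the indicator of a maximal run on which $h=M$ lowers the total variation by exactly $2$ at cost $1$, so an induction on the total variation produces a representation of cost exactly $\tfrac12\sum_i|\Delta h(i)|$ (the case $\max h\le 0$ being symmetric).

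Applied to the raw integer sum $g(i)=\sum_j c_j\bsone_{[a_j,b_j]}(i)$, which automatically satisfies $g(0)=g(n+1)=0$ and $g\equiv\fdiff\ (\mathrm{mod}\ N)$, this shows that \eqref{eq:P1} equals the minimum of $\tfrac12\sum_{i=0}^n|\Delta g(i)|$ over all integer $g$ vanishing at the endpoints with $g\equiv\fdiff\ (\mathrm{mod}\ N)$. I would then parametrize such $g$ as $g=\fdiff+N t$ with $t\colon[0,n+1]\to\ZZ$, $t(0)=t(n+1)=0$, so that $\Delta g(i)=\Deltafdx+N\,\Delta t(i)$ and the endpoint conditions amount solely to $\sum_{i=0}^n\Delta t(i)=0$. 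The objective $\tfrac12\sum_i|\Deltafdx+N\,\Delta t(i)|$ is separable in the increments $\Delta t(i)$ up to this single linear constraint, and $|\Deltafdx|\le N-1$.

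The heart of the argument is to analyze this constrained separable problem. I would first show by an exchange argument that an optimal $t$ has all increments in $\{-1,0,1\}$: if some $\Delta t(i_0)\ge 2$, then because the increments sum to zero there is $i_1$ with $\Delta t(i_1)\le -1$, and simultaneously decreasing $\Delta t(i_0)$ and increasing $\Delta t(i_1)$ by one keeps the constraint while lowering each of the two affected summands $|\Deltafdx+N\,\Delta t(i)|$ by $N$, contradicting optimality. Writing $U=\{i:\Delta t(i)=-1\}$ and $V=\{i:\Delta t(i)=+1\}$, the constraint becomes $\#U=\#V=:p$, and a direct computation shows the objective drops below $\tfrac12\sum_i|\Deltafdx|$ by $\sum_{i\in U}(2|\Deltafdx|-N)/2+\sum_{i\in V}(2|\Deltafdx|-N)/2$, a quantity that is only decreased by placing a downward jump in $U$ or an upward jump in $V$. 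Hence for fixed $p$ the optimum wraps the $p$ largest jumps of $\positive$ against the $p$ largest jumps of $\negative$, yielding the reduction $\sum_{k=1}^{p}\big(|\Delta\fdiff(i_k^+)|+|\Delta\fdiff(i_k^-)|-N\big)$.

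Finally I would optimize over $p\in\{0,\dots,K\}$. Since the enumerations of $\positive$ and $\negative$ are nonincreasing, the summands $|\Delta\fdiff(i_k^+)|+|\Delta\fdiff(i_k^-)|-N$ decrease in $k$, so the partial sums are maximized by keeping exactly the positive terms; this turns the reduction into $\sum_{k=1}^{K}\big(|\Delta\fdiff(i_k^+)|+|\Delta\fdiff(i_k^-)|-N\big)_+$ and gives \eqref{eq:formula}. The main obstacles I anticipate are proving achievability in the total-variation lemma (the layer-peeling induction) and justifying the two reductions in the separable problem, namely that increments outside $\{-1,0,1\}$ never help and that wrapping opposite-type jumps in decreasing order of size is optimal; the latter is a rearrangement/greedy argument that genuinely relies on the sum-zero constraint, without which the bound would be too weak.
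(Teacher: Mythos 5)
Your proof follows the same chain of reductions as the paper---the cost identity, the reformulation of \eqref{eq:P1} as minimizing $\frac{1}{2}\sum_{i=0}^{n}|\Delta \fdiff(i)+Nk'(i)|$ subject to $\sum_{i=0}^{n}k'(i)=0$ (Problem~\eqref{eq:P3}), the restriction of increments to $\{-1,0,1\}$, and the greedy pairing of large upward with large downward jumps---and it is essentially sound, but one step is false as stated. In your exchange argument you claim that decreasing $\Delta t(i_0)$ by one (where $\Delta t(i_0)\ge 2$) and increasing $\Delta t(i_1)$ by one (where $\Delta t(i_1)\le -1$) lowers \emph{each} of the two affected summands by $N$. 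At $i_1$ this fails when $\Delta t(i_1)=-1$ and $\Delta \fdiff(i_1)>N/2$: there the summand changes from $|\Delta \fdiff(i_1)-N|=N-\Delta \fdiff(i_1)$ to $|\Delta \fdiff(i_1)|=\Delta \fdiff(i_1)$, an \emph{increase} of $2\Delta \fdiff(i_1)-N>0$. The exchange nevertheless succeeds: at $i_0$ the old and new values are both positive (at least $N+1$ and $1$, respectively, using $|\Delta \fdiff(i_0)|\le N-1$), so that summand drops by exactly $N$, while the summand at $i_1$ increases by at most $N-2$; hence the objective strictly decreases. This is exactly the ``case distinction'' behind the paper's Lemma~\ref{lem:k-abs-one}, which is careful to claim only that the \emph{sum} of the two terms decreases. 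A related imprecision: your displayed formula for the drop of the objective is exact only when no downward jump lies in $U$ and no upward jump lies in $V$; for mismatched placements the true drop is strictly smaller than the formula, which is in fact what your greedy step needs. Both points are local and repairable.

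Beyond this shared skeleton, the genuinely different---and simpler---part of your proposal is the proof of the cost identity (Proposition~\ref{pro:budget}). Your lower bound, observing that each indicator $\bsone_{[a_j,b_j]}$ contributes $+c_j$ to $\Delta h(a_j-1)$ and $-c_j$ to $\Delta h(b_j)$ and hence at most $2|c_j|$ to the total variation, is a two-line triangle-inequality argument; the paper instead decomposes $\{i\in[0,n]\colon \Delta f(i)\ge 0\}$ into maximal intervals and needs the interlacing \eqref{eq:interlacing} of their endpoints to bound the coefficient of each $|c_j|$ by one. Your upper bound by repeatedly peeling a maximal run at the top level (or at the bottom level when $\max h\le 0$) is an inductive repackaging of the paper's superlevel-set decomposition: it produces the same blocks, assembled top-down rather than level-by-level, and the induction on the total variation (which drops by exactly $2$ per peel at cost $1$) makes achievability transparent. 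Finally, your ``fix $p$, then optimize over $p$'' organization of the pairing step is equivalent to the paper's gain function $G$ and Lemma~\ref{lem:G-positive}, and your observation that the nonincreasing enumeration lets one keep exactly the positive terms is identical to the paper's choice of $K$ in \eqref{eq:formula}.
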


Formula~\eqref{eq:formula} in Theorem~\ref{thm:main} gives the optimal value of Problem~\eqref{eq:P1} and thus the minimal number of rotations needed to turn $\flo$ into $\fun$, or equivalently, to turn $\fdiff$ into $(0,\dots,0)$. This answers the question raised in the beginning.

Before we prove Theorem~\ref{thm:main}, let us present two examples with $N=10$. 

\begin{example}\label{ex:1}
Let 
\[
\fdiff=(9,0,9,0,9)\quad\text{with}\quad\Delta \fdiff=(9,-9,9,-9,9,-9).
\]
It is not too hard to see that three rotations turning each nine to a zero suffice to turn $\fdiff$ to zero and that Theorem~\ref{thm:main} gives the same answer. 
\end{example}

\begin{example}\label{ex:2}
Let 
\[
\fdiff=(9,8,\ldots,1)\quad\text{with}\quad\Delta \fdiff=(9,-1,-1,\dots,-1).
\]
Rotate such that all non-zero entries of $\fdiff$ are increased by one ($9\rightarrow 0$). Doing this nine times we arrive at the zero combination and by Theorem~\ref{thm:main} this is the best possible. 
\end{example}

Comparing these examples, each rather extreme in its own way, we see that for Example~\ref{ex:1} both sides of the minus sign in \eqref{eq:formula} are rather large but roughly of the same size and compensate for each other. For Example~\ref{ex:2}, the sum on the left is not too large and the sum on the right vanishes. 

As a first step toward the proof of Theorem~\ref{thm:main} we give an interpretation of Problem~\eqref{eq:P1} in terms of efficiently building functions from indicators. For this purpose, consider combinations as functions from $[n]$ to $[0,N-1]$ which can be added and subtracted pointwise. If we subtract $\flo$ from both sides of the equality in \eqref{eq:P1}, then Problem \eqref{eq:P1} is equivalent to finding a representation of the form $\fdiff\equiv\sum_{j=1}^{m}c_j \bsone_{[a_j,b_j]}\ (\text{mod } N)$ with minimal cost $\sum_{j=1}^{m}|c_j|$ when each function $c_j\bsone_{[a_j,b_j]}$ costs $|c_j|$. Then this minimal cost is given by formula \eqref{eq:formula} in Theorem~\ref{thm:main}.

It will be helpful to extend this interpretation to more general functions as follows. For any $f\colon [n]\to \ZZ$ define its cost by
\begin{equation} \label{eq:budget}
C(f)
=\min\Big\{\sum_{j=1}^{m}|c_j|\colon f=\sum_{j=1}^{m}c_j\bsone_{[a_j,b_j]} \text{ where }m,a_j,b_j\text{ and }c_j \text{ are as in }\eqref{eq:P1}\Big\}. 
\end{equation}
Since for any $a,b\in\ZZ$ we have $a\equiv b\ (\mathrm{mod}\, N)$ if and only if $a+cN=b$ for some $c\in\ZZ$, Problem~\eqref{eq:P1} is equivalent to
\begin{align} \label{eq:P2}
\text{minimize}& \quad C(\fdiff+Nk)\nonumber\\
\text{subject to}& \quad k\colon [n]\to\ZZ \tag{P2},
\end{align}
where ``equivalent'' means that a solution to \eqref{eq:P2} gives rise to a solution of \eqref{eq:P1}, and vice versa, and that the optimal values are the same. 

We can split \eqref{eq:P2} into two subproblems, where the first consists of computing $C(f)$ for a general $f\colon [n]\to\ZZ$ and the second of finding the optimal $k$ in \eqref{eq:P2}. The first subproblem is resolved by the following proposition which is interesting on its own. 

\begin{proposition}\label{pro:budget}
For $f\colon [n]\to\ZZ$ we have
\begin{equation} \label{eq:pro-budget}
C(f)
= \frac{1}{2}\sum_{i=0}^{n}|\Deltafx|,
\end{equation}
where we set $f(0)=f(n+1)=0$.
\end{proposition}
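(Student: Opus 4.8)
The plan is to prove \eqref{eq:pro-budget} by establishing the two inequalities ``$\ge$'' and ``$\le$'' separately, both resting on a single observation: if we extend each indicator $\bsone_{[a,b]}$ by zero at $0$ and $n+1$ exactly as we did for $f$, then its forward difference $\Delta\bsone_{[a,b]}$ takes the value $+1$ at $i=a-1$, the value $-1$ at $i=b$, and $0$ everywhere else on $[0,n]$. In particular $\sum_{i=0}^{n}|\Delta\bsone_{[a,b]}(i)|=2$, which already matches the claimed formula for the single-interval function $\bsone_{[a,b]}$, whose cost is $1$.

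For the lower bound I would fix an arbitrary feasible representation $f=\sum_{j=1}^{m}c_j\bsone_{[a_j,b_j]}$ and difference it termwise, using linearity of $\Delta$, to get $\Deltafx=\sum_{j=1}^{m}c_j\,\Delta\bsone_{[a_j,b_j]}(i)$. Applying the triangle inequality pointwise and then summing over $i\in[0,n]$, the observation above gives
\[
\sum_{i=0}^{n}|\Deltafx|\le\sum_{j=1}^{m}|c_j|\sum_{i=0}^{n}|\Delta\bsone_{[a_j,b_j]}(i)|=2\sum_{j=1}^{m}|c_j|.
\]
Dividing by $2$ and taking the infimum over all representations yields $\tfrac12\sum_{i=0}^{n}|\Deltafx|\le C(f)$.

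For the matching upper bound I would construct an explicit representation of cost exactly $\tfrac12\sum_{i=0}^{n}|\Deltafx|$. The starting point is that the telescoping identity $\sum_{i=0}^{n}\Deltafx=f(n+1)-f(0)=0$ forces the total mass $V$ of the positive part of $\Delta f$ to equal that of its negative part, so that $\tfrac12\sum_{i=0}^{n}|\Deltafx|=V$. Viewing $\Delta f$ as $V$ ``up-units'' (at positions where $\Deltafx>0$) and $V$ ``down-units'' (where $\Deltafx<0$), I would fix any bijection pairing each up-unit at a position $u$ with a down-unit at a position $d$; since up- and down-positions are disjoint we always have $u\ne d$. Each such pair is realized at cost $1$ by a single interval: if $u<d$ take $[u+1,d]$ with coefficient $+1$, and if $d<u$ take $[d+1,u]$ with coefficient $-1$, in both cases recovering precisely a $+1$ at $u$ and a $-1$ at $d$ after differencing. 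Summing these $V$ elementary terms, and merging any that share the same interval (which is consistent, since the sign forced on a given interval $[a,b]$ depends only on the fixed signs of $\Deltafx$ at $a-1$ and $b$), reproduces $\Delta f$ and hence $f$, at total cost $V$, giving $C(f)\le V$.

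The two inequalities combine to \eqref{eq:pro-budget}. The routine part is the differencing identity and the triangle inequality; the step needing genuine care — the main obstacle — is the upper bound, specifically checking that every pair yields an admissible interval with $a_j,b_j\in[n]$ and $a_j\le b_j$ (the boundary positions $i=0$ and $i=n$ being the cases to watch), and that the sign conventions make the contributions add up to $\Delta f$ without unintended cancellation, so that the constructed cost is exactly $V$ rather than merely an upper estimate.
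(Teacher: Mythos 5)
Your proof is correct, but it takes a genuinely different route from the paper on both inequalities. For the lower bound, you difference an arbitrary representation termwise and use that $\Delta\bsone_{[a,b]}$ is a dipole ($+1$ at $a-1$, $-1$ at $b$, zero elsewhere), so a pointwise triangle inequality gives $\sum_{i=0}^{n}|\Delta f(i)|\le 2\sum_{j=1}^{m}|c_j|$ at once; the paper instead decomposes the set $\{i\in[0,n]\colon \Delta f(i)\ge 0\}$ into maximal intervals, telescopes $f$ over them, and needs an interlacing argument to see that each indicator contributes at most $1$ to the resulting signed sum. Your version is shorter and more transparent. For the upper bound, you pair each unit of upward jump with a unit of downward jump (a transport-style argument) and realize each pair by a single signed interval; your attention to the boundary positions and to merging coincident intervals is exactly the right care, and the merging is indeed safe because the sign of the coefficient attached to an interval $[a,b]$ is forced by the sign of $\Delta f(a-1)$, so contributions on a common interval can never cancel. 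The paper instead writes $f=f_+-f_-$ and uses a layer-cake decomposition: each superlevel set $\{f_\pm\ge\ell\}$ is split into maximal intervals, and the total number of intervals over all levels is computed to be $\sum_{i}\Delta f(i-1)_+$. Both constructions achieve cost exactly $\frac{1}{2}\sum_{i=0}^{n}|\Delta f(i)|$, so the two proofs are equally valid. What the paper's route buys is structural: the level counts $M_\ell$ are precisely the discrete Banach indicatrix/Harman variation quantities discussed in Section~\ref{sec:variation}, and the same superlevel decomposition is reused after Theorem~\ref{thm:main} to write down an explicit optimal set of rotations. What your route buys is economy: a lower bound with no combinatorial interlacing lemma, and an upper bound whose pairing idea foreshadows the pairing of large upward with large downward jumps that drives the proof of Theorem~\ref{thm:main} itself.
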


The proof of Proposition~\ref{pro:budget} is given in the upcoming section as part of the proof of Theorem~\ref{thm:main}. From the proofs we deduce an optimal set of rotations solving problem \eqref{eq:P1}, thus providing its full solution. In Section~\ref{sec:variation} we connect Proposition~\ref{pro:budget} to existing results on different concepts of variation.

\section{Proofs and optimal rotations.}

We begin by proving an extended version of Proposition~\ref{pro:budget}. To state it, define for any $x\in\ZZ$ its positive part $x_+=\max\{x,0\}$ and its negative part $x_-=(-x)_+$ such that $x=x_+-x_-$ and $|x|=x_+ + x_-$. Intuitively, it expresses the cost as half the total sum of upward and downward jumps which is equal to the sum over all upward or downward jumps.

\begin{proposition}\label{pro:budget-general}
With $C(f)$ as in \eqref{eq:budget}, for any $f\colon [n]\to\ZZ$ we have 
\[
C(f)
= \frac{1}{2}\sum_{i=0}^{n}|\Deltafx|
=\sum_{i=0}^{n}\Deltafx_+
=\sum_{i=0}^{n}\Deltafx_-,
\]
where we set $f(0)=f(n+1)=0$.
\end{proposition}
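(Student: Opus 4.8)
The plan is to first reduce the three-fold equality to a single identity and then prove $C(f)=\frac12\sum_{i=0}^n|\Deltafx|$ by matching a lower and an upper bound. Since $f(0)=f(n+1)=0$, the sum $\sum_{i=0}^n\Deltafx=f(n+1)-f(0)=0$ telescopes, so $\sum_{i=0}^n\Deltafx_+=\sum_{i=0}^n\Deltafx_-=:P$, and because $|x|=x_++x_-$ we also get $\frac12\sum_{i=0}^n|\Deltafx|=P$. Thus all three right-hand sides agree and it remains to show $C(f)=P$.

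The key step is to pass to forward differences. Applying the linear operator $\Delta$ to a representation $f=\sum_{j=1}^m c_j\bsone_{[a_j,b_j]}$ gives $\Deltafx=\sum_{j=1}^m c_j\big(e_{a_j-1}(i)-e_{b_j}(i)\big)$, where $e_p$ is the unit vector supported at $p\in[0,n]$, since each $\bsone_{[a,b]}$ has forward difference $+1$ at $a-1$, $-1$ at $b$, and $0$ elsewhere. As $a\le b$ forces $a-1<b$, every interval yields a pair of opposite unit jumps at two distinct positions $p<q$, and conversely each pair $0\le p<q\le n$ comes from the interval $[p+1,q]$. Because every indicator and $f$ vanish at $0$ (and at $n+1$), matching forward differences is equivalent to equality of the functions themselves; hence computing $C(f)$ as in \eqref{eq:budget} amounts to minimising $\sum_j|c_j|$ over representations of the zero-sum vector $\Deltafx$ as $\sum_j c_j(e_{p_j}-e_{q_j})$ with $p_j<q_j$. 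The crucial feature is that the cost $|c_j|$ of such a block does not depend on the distance $q_j-p_j$, turning \eqref{eq:P1} into a transportation problem with unit cost.

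For the lower bound I would apply the $\ell_1$-triangle inequality to the displayed identity: $\sum_{i=0}^n|\Deltafx|\le\sum_{j=1}^m|c_j|\,\|e_{a_j-1}-e_{b_j}\|_1=2\sum_{j=1}^m|c_j|$, since each $\|e_{a_j-1}-e_{b_j}\|_1=2$. Taking the infimum over representations gives $P=\frac12\sum_{i=0}^n|\Deltafx|\le C(f)$. For the upper bound I would build a representation of cost exactly $P$ greedily: the positive jumps carry total mass $P$ and so do the negative jumps, so one repeatedly picks a position $p$ with remaining positive jump and a position $q$ with remaining negative jump, transfers $t=\min$ of the two outstanding masses, and records the block $t\,\bsone_{[p+1,q]}$ when $p<q$ or $-t\,\bsone_{[q+1,p]}$ when $q<p$. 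Either way this realises $t(e_p-e_q)$ at cost $t$ and decreases both outstanding masses by $t$, so after finitely many steps all jumps are used up, the recorded blocks sum to a function with forward difference $\Deltafx$ (hence equal to $f$), and the total cost is $P$. Combining the two bounds yields $C(f)=P$.

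The only delicate points are bookkeeping rather than ideas: one must check that $\Delta$ is a bijection between functions vanishing at $0$ and difference vectors, so that agreement of differences truly pins down the function, and that the greedy matching terminates and reassembles to $f$ (grouping any repeated intervals by summing their coefficients, which only helps the cost). The main conceptual obstacle is recognising that the cost of an interval is independent of its length, which reduces the whole question to counting transported mass and makes both the triangle-inequality lower bound and the greedy upper bound short.
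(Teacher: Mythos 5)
Your proof is correct, but it takes a genuinely different route from the paper's. The paper proves the upper bound by decomposing the superlevel sets $\{i\in[n]\colon f_\pm(i)\ge \ell\}$ into maximal intervals and writing each of $f_+$, $f_-$ as a layer-by-layer sum of indicators (inequality \eqref{eq:discrete-harman}); it proves the lower bound by evaluating an arbitrary representation at the endpoints of the maximal intervals of $\{i\in[0,n]\colon \Delta f(i)\ge 0\}$ and using an interlacing argument to show that each indicator contributes at most $1$ to the relevant alternating sum. You instead work entirely on the level of forward differences: applying $\Delta$ turns each $\bsone_{[a,b]}$ into a pair of opposite unit masses at $a-1$ and $b$, and since $\Delta$ is injective on functions vanishing at $0$, the problem becomes a unit-cost transportation problem for the zero-sum vector $\Delta f$. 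This makes your lower bound a one-line $\ell_1$ triangle inequality (the difference of each indicator has $\ell_1$-norm exactly $2$, regardless of the interval's length), which is considerably shorter than the paper's interlacing argument, and your greedy pairing of positive against negative jumps replaces the superlevel-set construction for the upper bound; you also correctly handle integrality of the coefficients and the requirement that the intervals in \eqref{eq:budget} be distinct. What the paper's route buys in exchange is structural: the superlevel-set decomposition is exactly what Section~\ref{sec:variation} reuses to interpret $C(f)$ as a discrete Harman variation (the counts $M_\ell$ are discrete Banach indicatrices), and the same decomposition is invoked after Theorem~\ref{thm:main} to write down explicit optimal rotations via \eqref{eq:optimal-rep}. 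Your matching argument would serve that constructive purpose equally well --- indeed it is close to the informal description in the caption of Figure~\ref{fig:pro-example} (``create blocks at upward jumps, close them at downward jumps'') --- but it loses the direct link to the indicatrix and to Harman's variation.
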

\begin{proof}
The second and the third equality follow from
\[
0
=\sum_{i=0}^{n}\Deltafx
=\sum_{i=0}^{n}\Deltafx_+-\sum_{i=0}^{n}\Deltafx_-.
\]

Next, we prove the upper bound of $C(f)$. The idea behind the proof is to write $f=f_+-f_-$ with $f_+(i)=f(i)_+$ and $f_-(i)=f(i)_-$ for $i\in [n]$ and to represent the nonnegative functions $f_+$ and $f_-$ as in \eqref{eq:budget}. This is done by partitioning superlevel sets into connected components and summing the corresponding indicator functions as indicated in Figure~\ref{fig:pro-example}. 
\begin{figure}[h]
	\begin{center}
		\includegraphics[scale=1]{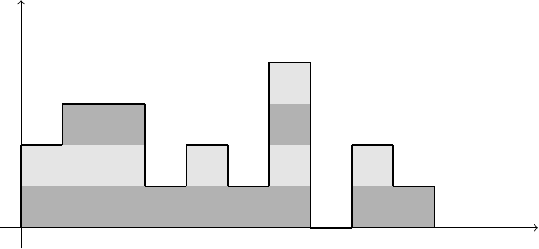}
	\end{center}
	\caption{\small{A nonnegative function $f\colon [n]\to \ZZ$ is graphically represented as a linear combination of indicator functions. Each shaded block corresponds to the interval it spans horizontally and is shaded according to whether this interval belongs to a superlevel set $\{f\ge \ell\}$ of odd or even level $\ell\in\IN$. Alternatively, one can view $C(f)$ as the minimal number of blocks needed to build $f$ and decomposing the superlevel sets provides a formal way of doing this in an optimal way. Intuitively, going from left to right, one can create new blocks at upward jumps and close them at downward jumps.}}
	\label{fig:pro-example}
\end{figure}

Assume first that $f\ge 0$, which applies to $f_+$ and $f_-$. Then for each level $\ell\in\IN$ with $\ell\le \sup f=\sup_{i\in [n]} f(i)$ we find $M_{\ell}\in\IN_0$ and $a_{\ell,k}, b_{\ell,k}\in [n]$ for $k\in [M_{\ell}]$ such that
\[
\{i\in [n]\colon f(i)\ge \ell\}
=\bigcup_{k=1}^{M_{\ell}}[a_{\ell,k},b_{\ell,k}]
\]
is a disjoint union of intervals of maximal length. The endpoints of the intervals are given by the location of upward and downward jumps, i.e., 
\begin{align*}
\big\{a_{\ell,k}\colon k\in [M_{\ell}]\big\} 
&=\big\{i\in[n]\colon f(i-1)<\ell\le f(i)\big\},\\
\big\{b_{\ell,k}\colon k\in [M_{\ell}]\big\} 
&=\big\{i\in[n]\colon f(i+1)<\ell\le f(i)\big\}
\end{align*}
and if $M_{\ell}\ge 2$ we assume without loss of generality that they are ordered such that 
\begin{equation} \label{eq:interlacing-ell}
	a_{\ell,k}< b_{\ell,k}+1 <a_{\ell,k+1}<b_{\ell,k+1}+1\quad \text{for every}\quad k\in [M_{\ell}-1].
\end{equation}
From the right-hand side of the pointwise equalities
\[
f
=\sum_{\ell=1}^{\sup f}\bsone_{\{i\in [n]\colon f(i)\ge \ell\}}
=\sum_{\ell=1}^{\sup f}\sum_{k=1}^{M_{\ell}}\bsone_{[a_{\ell,k},b_{\ell,k}]},
\]
we can collect equal summands, if there are any, to obtain a representation as in \eqref{eq:budget}. In any case, the cost is $\sum_{\ell=1}^{\sup f}M_{\ell}$ and thus
\[
M_{\ell}
=\#\{i\in[n]\colon f(i-1)<\ell\le f(i)\}
=\#\{i\in[n]\colon \ell\in (f(i-1),f(i)]\}
\]
yields the bound
\begin{equation} \label{eq:discrete-harman}
C(f)
\le\sum_{\ell=1}^{\sup f}M_{\ell}
=\sum_{\ell=1}^{\sup f}\sum_{i\in[n]}\bsone_{(f(i-1),f(i)]}(\ell)
=\sum_{i=1}^n\Delta f(i-1)_+.
\end{equation}
This establishes the upper bound of $C(f)$ if $f\ge 0$. 

In general, write $f=f_+-f_-$ with $f_+,f_-\ge 0$. By the first part of the proof we can write $f_+=\sum_{j=1}^{m^+}c_j^+ \bsone_{I_j^+}$ as in \eqref{eq:budget} with $m^+\in\IN, c_j^+\in\ZZ$ and intervals $I_j^+\subseteq [n]$ satisfying
\[
\sum_{j=1}^{m^+}|c_j^+|
\le \frac{1}{2}\sum_{i=0}^{n}|f_+(i+1)-f_+(i)|,
\]
and analogously for $f_-$. Let
\[
I_j=I_j^+\text{ and } c_j=c_j^+\text{ for }j\in [m^+]\quad\text{and}\quad I_{j+m^+}=I_j^-\text{ and } c_{j+m^+}=-c_j^-\text{ for }j\in [m^-].
\]
With $m=m^+ +m^-$ we obtain the pointwise equalities
\[
f
=f_+ - f_-
=\sum_{j=1}^{m^+}c_j^+ \bsone_{I_j^+} -\sum_{j=1}^{m^-}c_j^-\bsone_{I_j^-}
=\sum_{j=1}^{m}c_j \bsone_{I_j}.
\]
Further,
\begin{align*}
C(f)
\le \sum_{j=1}^{m}|c_j|
&\le\frac{1}{2}\sum_{i=0}^{n}| f_+(i+1)-f_+(i)| + \frac{1}{2}\sum_{i=0}^{n}|f_-(i+1)-f_-(i)|\\
&=\frac{1}{2}\sum_{i=0}^{n}|f(i+1)-f(i)|
\end{align*}
since $|x_+-y_+|+|x_- -y_-|=|x-y|$ for every $x,y\in\IR$. This completes the proof of the upper bound of $C(f)$ in the general case.

For the lower bound let $m\in\IN$, $c_j\in\ZZ$, and $I_j=[a_j,b_j]\subseteq [n]$ for $j\in [m]$ be such that
\begin{equation} \label{eq:lower-representation}
	f=\sum_{j=1}^{m}c_j\bsonei.
\end{equation}
It is sufficient to show
\begin{equation} \label{eq:lower-goal}
	\sum_{j=1}^{m}|c_j|
	\ge \sum_{i=0}^{n}\Deltafx_+.
\end{equation}

Trivially, this holds if right-hand side of \eqref{eq:lower-goal} equals zero and in particular if the set
\[
\Delta^+_0
=\{i\in [0,n]\colon \Deltafx\ge 0\}
\]
is empty. Assume that $\Delta^+_0$ is nonempty and, as for the superlevel sets above, find $M$ and $a_k',b_k'\in [0,n]$ for $ k\in [M]$ such that
\begin{equation} \label{eq:decomposition}
\Delta^+_0
=\bigcup_{k=1}^M [a_k',b_k']
\end{equation}
is a disjoint union of intervals of maximal length. Again, if $M\ge 2$, we assume without loss of generality that the endpoints of the intervals are ordered such that
\begin{equation} \label{eq:interlacing}
	a_k'< b_k'+1 <a_{k+1}'<b_{k+1}'+1\quad \text{for every}\quad k\in [M-1].
\end{equation}
Then the right-hand side of \eqref{eq:lower-goal} equals
\[
 \sum_{i\in \Delta^+_0}\Deltafx_+
= \sum_{k=1}^M\sum_{x=a_k'}^{b_k'} \Deltafx
= \sum_{k=1}^M f(b_k'+1)-f(a_k').
\]
With an interchange of sums, the representation \eqref{eq:lower-representation} yields 
\[
\sum_{k=1}^M f(b_k'+1)-f(a_k')
=\sum_{j=1}^{m} c_j\Big(\sum_{k=1}^M\bsonei(b_k'+1)-\sum_{k=1}^M\bsonei(a_k')\Big).
\]
After applying absolute values and the triangle inequality we arrive at
\[
\Big|\sum_{k=1}^M f(b_k'+1)-f(a_k')\Big|
\le \sum_{j=1}^{m} |c_j|\Big|\sum_{k=1}^M\bsonei(b_k'+1)-\sum_{k=1}^M\bsonei(a_k')\Big|.
\]
In the case of $M=1$ the proof of \eqref{eq:lower-goal} is already complete and if $M\ge 2$ the interlacing of the $a_k'$ and $(b_k'+1)$ as in \eqref{eq:interlacing} gives 
\[
\Big|\sum_{k=1}^M\bsonei(b_k'+1)-\sum_{k=1}^M\bsonei(a_k')\Big|
\le 1,\quad j\in [m].
\]
This proves the lower bound \eqref{eq:lower-goal} and completes the proof.
\end{proof}
 
Thus, also Proposition~\ref{pro:budget} is now proven. Together with the linearity of the difference $\Delta$ it implies that problem \eqref{eq:P2} can be restated as 
\begin{align} \label{eq:P2'}
\text{minimize}& \quad \frac{1}{2}\sum_{i=0}^{n}|\Deltafdx + N\Delta k(i)|\nonumber\\
\text{subject to}& \quad k\colon [0,n+1]\to\ZZ \text{ with } k(0)=k(n+1)=0. \tag{P2}
\end{align}

As $\Delta k$ appears in the objective function of \eqref{eq:P2'}, it will be helpful to optimize over $\Delta k$ instead of $k$. We observe that any $k\colon [n]\to\ZZ$ as in \eqref{eq:P2'} is determined by its difference $\Delta k\colon [0,n]\to \ZZ$ and that any function $k'\colon [0,n]\to \ZZ$ is equal to $\Delta k\colon [0,n]\to\ZZ$ for some $k\colon [n]\to\ZZ$ as in \eqref{eq:P2'} if and only if $\sum_{i=0}^{n}k'(i)=0$. Therefore, \eqref{eq:P2'}, and thus \eqref{eq:P1}, are equivalent to
\begin{align} 
\text{minimize}& \quad \frac{1}{2}\sum_{i=0}^{n}|\Deltafdx + Nk'(i)|\nonumber\\
\text{subject to}& \quad k'\colon [0,n]\to \ZZ,  \quad \sum_{i=0}^{n}k'(i)=0. \tag{P3}\label{eq:P3}
\end{align}

The following lemma on solutions to \eqref{eq:P3} is a consequence of
\[
|\Deltafdx|\le N-1,\quad i\in [0,n].
\]
\begin{lemma} \label{lem:k-abs-one}
	If $k'$ solves \eqref{eq:P3}, then $|k'(i)|\le 1$ for all $i\in [0,n]$.
\end{lemma}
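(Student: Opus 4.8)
The plan is to argue by contradiction through a local exchange that moves one unit of ``mass'' between coordinates while preserving the constraint $\sum_{i=0}^{n}k'(i)=0$. Suppose $k'$ is optimal for \eqref{eq:P3} but $k'(i_0)\ge 2$ for some $i_0\in[0,n]$. Since the coordinates of $k'$ sum to zero and $k'(i_0)>0$, there must be an index $j_0\ne i_0$ with $k'(j_0)\le -1$. I would then define $\tilde k'$ to agree with $k'$ everywhere except that $\tilde k'(i_0)=k'(i_0)-1$ and $\tilde k'(j_0)=k'(j_0)+1$. This leaves the sum unchanged, so $\tilde k'$ is feasible for \eqref{eq:P3}, and it suffices to show that $\tilde k'$ strictly lowers the objective, contradicting optimality of $k'$.

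The key step is to track the only two summands of $\tfrac12\sum_{i=0}^{n}|\Delta\fdiff(i)+Nk'(i)|$ that change, using the standing bound $|\Delta\fdiff(i)|\le N-1$ throughout. At $i_0$, because $k'(i_0)\ge 2$ the value $\Delta\fdiff(i_0)+Nk'(i_0)$ and the shifted value $\Delta\fdiff(i_0)+N(k'(i_0)-1)$ are both positive (here $\Delta\fdiff(i_0)\ge -(N-1)$ guarantees even the shifted argument exceeds zero), so this term decreases by exactly $N$. At $j_0$, set $a=\Delta\fdiff(j_0)+Nk'(j_0)$; since $k'(j_0)\le -1$ and $\Delta\fdiff(j_0)\le N-1<N$ we have $a\le -1<0$. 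As $a<0$ and $N>0$ have opposite signs, the triangle inequality $|a+N|\le|a|+N$ is strict, and since all quantities are integers this gives $|a+N|-|a|\le N-1$, so the $j_0$-term increases by at most $N-1$. Summing the two contributions, the total change of $\sum_{i=0}^{n}|\cdot|$ is at most $-N+(N-1)=-1<0$, hence the objective strictly decreases, which is the desired contradiction. Therefore no coordinate of an optimal $k'$ can equal $2$ or more.

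The remaining case $k'(i_0)\le -2$ is handled identically by the symmetric exchange: now some $j_0$ has $k'(j_0)\ge 1$, and one increases $k'(i_0)$ by one while decreasing $k'(j_0)$ by one; the bound $|\Delta\fdiff(i)|\le N-1$ again forces the $i_0$-term to drop by exactly $N$ and the $j_0$-term to rise by at most $N-1$. (Equivalently, one may simply replace $(\fdiff,k')$ by $(-\fdiff,-k')$, which leaves both the objective and the constraint invariant.) I expect the only real obstacle to be the sign bookkeeping: one must verify carefully that the argument at $i_0$ stays positive after subtracting $N$ and that the argument at $j_0$ is strictly negative, as these are precisely the points where $|\Delta\fdiff(i)|\le N-1$ is indispensable and where the asymmetry $-N$ versus $+(N-1)$ that yields the strict decrease comes from.
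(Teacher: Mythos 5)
Your proof is correct and follows essentially the same route as the paper: an exchange argument that moves one unit from a coordinate with $k'(i_0)\ge 2$ to one with $k'(j_0)\le -1$, preserving the constraint $\sum_{i=0}^{n}k'(i)=0$ and strictly decreasing the objective via the bound $|\Delta\fdiff(i)|\le N-1$. The paper merely asserts that ``a case distinction shows'' the strict decrease, whereas you supply the details (exact drop of $N$ at $i_0$, increase of at most $N-1$ at $j_0$ by strictness of the triangle inequality plus integrality), so your write-up is in fact more complete than the published sketch.
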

\begin{proof}
	Suppose that $k'$ solves \eqref{eq:P3}. If $k'(i_1)> 1$ for some $i_1\in [0,n]$, then also $k'(i_2)\le -1$ for some $i_2\in [0,n]$. A case distinction shows that 
\[
|\Delta \fdiff(i_1) + N(k'(i_1)-1)|+|\Delta \fdiff(i_2) + N(k'(i_2)+1)|
\]
is strictly smaller than
\[
|\Delta \fdiff(i_1) + Nk'(i_1)|+|\Delta \fdiff(i_2) + Nk'(i_2)|.
\]
This contradicts the minimality of $k'$. The case $k'(i_1)<-1$ is analogous.
\end{proof}

Lemma~\ref{lem:k-abs-one} shows that for any function $k'$ with $\sum_{i=0}^{n}k'(i)=0$ appearing in \eqref{eq:P3} we can (and do so from now on) replace the codomain $\ZZ$ by $\{-1,0,1\}$ without changing the solution of \eqref{eq:P3}. For such a $k'$ there are as many $i^+\in [0,n]$ with $k'(i^+)=-1$ as $i^-\in [0,n]$ with $k'(i^-)=1$ (the choice of signs will become clear in a moment). By pairing these points, we can associate to $k'$ a set of pairs 
\begin{equation} \label{eq:pairs}
P\subseteq [0,n]^2 \text{ with } (i_1^+,i_1^-)\neq (i_2^+,i_2^-)\in P \Rightarrow \{i_1^+,i_1^-\}\cap \{i_2^+,i_2^-\}=\emptyset,
\end{equation}
i.e., any two different pairs from $P$ do not have a common element. Vice versa, to any $P$ we can associate a function $k'$ as in \eqref{eq:P3} by setting $k'(i^+)=-1$ and $k'(i^-)=1$ for every pair $(i^+,i^-)\in P$ and $k'(i_0)=0$ if $i_0\in P_0$, where 
\[
P_0
=\big\{i_0\in [0,n]\colon (i_0,i)\not\in P\wedge (i,i_0)\not\in P\text{ for all }i\in [0,n]\big\}
\]
is the set of $i_0\in [0,n]$ not belonging to any pair of $P$. This establishes (a non-bijective) correspondence between sets of pairs $P$ as in \eqref{eq:pairs} and functions $k'$ as in \eqref{eq:P3} with $\ZZ$ replaced by $\{-1,0,1\}$. Consequently, a change of variables allows us to restate \eqref{eq:P3} in terms of $P$ instead of $k'$. As a preparation, define,  for any $i^+,i^-\in [0,n]$,
\begin{align} \label{eq:G-def}
G(i^+,i^-)
=&|\Delta \fdiff(i^+)|+|\Delta \fdiff(i^{-})|\nonumber\\ 
&-\big(|\Delta \fdiff(i^+)-N|+|\Delta \fdiff(i^-)+N|\big),
\end{align}
which is the gain in the objective function if $(i^+,i^-)\in P$ instead of $k'(i^+)=k'(i^-)=0$. Then \eqref{eq:P3} and thus \eqref{eq:P1} are equivalent to 

\begin{align} 
\text{minimize}& \quad  \frac{1}{2}\Big(\sum_{i=0}^{n}|\Delta \fdiff(i)|-\sum_{(i^+,i^-)\in P}G(i^+,i^-)\Big)\nonumber\\
\text{subject to}& \quad P \text{ is as in \eqref{eq:pairs}}. \tag{P4}\label{eq:P4}
\end{align}

Solving \eqref{eq:P4} is equivalent to finding a set of pairs $P$ such that the total gain $\sum_{(i^+,i^-)\in P}G(i^+,i^-)$ is maximal. We first analyze the function $G$ and partition $[0,n]$ into the sets 
\begin{align*}
\positive&=\{i\colon \Deltafdx>0\},\quad\\
\negative&=\{i\colon \Deltafdx<0\}\quad \text{and}\quad\\
\const&=\{i\colon \Deltafdx=0\}.
\end{align*}
\begin{lemma} \label{lem:G-positive}
Let $i^+,i^-\in [0,n]$. If $i^+\in \negative\cup \const$ or $i^-\in\positive \cup \const$, then we have $G(i^+,i^-)<0$. Else, i.e., if $i^+\in\positive$ and $i^-\in\negative$, then 
\begin{equation} \label{eq:G-pos-neg}
G(i^+,i^-)=2\big(|\Delta \fdiff(i^+)|+|\Delta \fdiff(i^-)|-N\big).
\end{equation}
\end{lemma}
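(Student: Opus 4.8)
The plan is to reduce $G$ to a single compact expression by resolving the two absolute values $|\Delta\fdiff(i^+)-N|$ and $|\Delta\fdiff(i^-)+N|$ in the definition \eqref{eq:G-def}, after which both assertions can be read off at once. The crucial input is the bound $|\Delta\fdiff(i)|\le N-1$ stated just before the lemma: writing $a=\Delta\fdiff(i^+)$ and $b=\Delta\fdiff(i^-)$ for brevity, it forces $a-N\le -1<0$ and $b+N\ge 1>0$, so that $|a-N|=N-a$ and $|b+N|=b+N$ regardless of the signs of $a$ and $b$. This is the single place where the hypothesis $|\Delta\fdiff|\le N-1$ is genuinely used, and pinning down these two sign determinations is the only real subtlety; everything else is bookkeeping.

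Substituting these two identities into \eqref{eq:G-def} yields
\[
G(i^+,i^-)=|a|+|b|-(N-a)-(b+N)=(|a|+a)+(|b|-b)-2N.
\]
Next I would record the elementary identities $|a|+a=2a_+$ and $|b|-b=2b_-$, valid for all $a,b\in\ZZ$ by checking the two sign cases separately. These collapse the display to the single formula
\[
G(i^+,i^-)=2\big(\Delta\fdiff(i^+)_+ + \Delta\fdiff(i^-)_- - N\big).
\]

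With this formula in hand both claims are immediate. If $i^+\in\positive$ and $i^-\in\negative$, then $\Delta\fdiff(i^+)_+=|\Delta\fdiff(i^+)|$ and $\Delta\fdiff(i^-)_-=|\Delta\fdiff(i^-)|$, which is exactly \eqref{eq:G-pos-neg}. For the negativity statement, observe that $i^+\in\negative\cup\const$ forces $\Delta\fdiff(i^+)_+=0$, whereas $i^-\in\positive\cup\const$ forces $\Delta\fdiff(i^-)_-=0$; in either situation the vanishing term drops out and the surviving positive (resp.\ negative) part is at most $N-1$ by the bound on the jumps, so $G(i^+,i^-)\le 2\big((N-1)-N\big)=-2<0$. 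I do not expect any serious obstacle beyond the careful handling of the absolute values: the entire lemma amounts to the observation that the hypothesis $|\Delta\fdiff|\le N-1$ linearizes $G$ into the form displayed above.
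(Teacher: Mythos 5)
Your proof is correct and takes essentially the same route as the paper's: the key step in both is to use $|\Delta \fdiff|\le N-1$ to resolve the absolute values $|\Delta \fdiff(i^+)-N|=N-\Delta \fdiff(i^+)$ and $|\Delta \fdiff(i^-)+N|=\Delta \fdiff(i^-)+N$, after which everything is algebra. Your single closed formula $G(i^+,i^-)=2\big(\Delta \fdiff(i^+)_+ + \Delta \fdiff(i^-)_- - N\big)$ is a mild streamlining of the paper's case-by-case computation, but the mathematical content is identical.
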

\begin{proof}
Suppose that $i^+\in \negative\cup \const$. Then $|\Delta \fdiff(i^-)|\le N-1$ implies
\[
G(i^+,i^-)
=|\Delta \fdiff(i^-)|-N-|\Delta \fdiff(i^-)+N|
\le -2
<0.
\]
The case $i^-\in \positive\cup \const$ is analogous and equality \eqref{eq:G-pos-neg} follows from a brief calculation.
\end{proof}

By Lemma~\ref{lem:G-positive} the gain $G(i^+,i^-)$ will be negative, except if $(i^+,i^-)\in \Delta_+\times \Delta_-$, which is nonempty if and only if $\fdiff\neq 0$. Consequently, any solution $P$ to problem \eqref{eq:P4} only includes pairs $(i^+,i^-)\in \Delta_+\times \Delta_-$, if any at all, and thus we can assume without loss of generality that $P\subseteq \positive\times \negative$. Then 
\begin{equation} \label{eq:P-opt}
\sum_{(i^+,i^-)\in P}G(i^+,i^-)
= 2\sum_{(i^+,i^-)\in P}\big(|\Delta\fdiff(i^+)|+|\Delta\fdiff(i^-)|-N\big).
\end{equation}

In order to maximize \eqref{eq:P-opt} (and minimize the objective function in \eqref{eq:P4}), we choose pairs $(i^+,i^-)\in \Delta_+\times \Delta_-$ maximizing $|\Delta\fdiff(i^+)|+|\Delta\fdiff(i^-)|$ as long as this sum is at least $N$. For this, enumerate $\Delta_+$ nonincreasingly according to the size of $|\Delta \fdiff(i^+)|$, i.e.,
\[
\positive=\{i_1^+,\dots,i_{\#\positive}^+\}\quad\text{such that}\quad|\Delta \fdiff(i_1^+)|\ge \cdots\ge|\Delta \fdiff(i_{\#\positive}^+)|
\]
and analogously for $\negative$ instead of $\positive$. Then the optimal $P$ will consist of the pairs $(i_1^+,i_1^-),\dots,(i_{K}^+,i_{K}^-)$, where $K\in\IN_0$ is maximal with $K \le \min\{\#\positive,\#\negative\}$ and
\begin{equation} \label{eq:gains}
|\Delta \fdiff(i_k^+)|+|\Delta \fdiff(i_k^-)|-N\ge 0,\quad k\in [K].
\end{equation}
If $K=0$ we set $P=\emptyset$. Finally, if $k\in [K+1,\min\{\#\positive,\#\negative\}]$, then the positive part of expression in \eqref{eq:gains} is zero.

This proves Theorem~\ref{thm:main}. \hfill $\square$

\begin{remark}
	As a consequence of the proof of Theorem~\ref{thm:main}, it does not matter whether we allow rotations with $c_j\in\IR$ instead of $c_j\in \ZZ$. This means the additional freedom of being able to rotate by an arbitrary fraction does not yield an improvement.	
\end{remark}

	So far we have only given the optimal value of \eqref{eq:P1} and in the following we show how a full solution can be found using the proof of Theorem~\ref{thm:main}. In general, it is not unique.
	
	In order to obtain optimal parameters $m,a_j,b_j$ and $c_j$ for Problem~\eqref{eq:P1}, we backtrack our steps through Problems \eqref{eq:P4},\eqref{eq:P3} and \eqref{eq:P2}. Looking at Problem~\eqref{eq:P4} we find an optimal set of pairs $P$ and consequently an optimal $k'$ in Problem~\eqref{eq:P3}. With the notation as in the proof of Theorem~\ref{thm:main}, for such a $k'$ its value $k'(i)$ is equal to $\pm 1$ if and only if $i\in \{i_1^{\pm},\dots,i_K^{\pm}\}$. From such an optimal $k'$ we can construct an optimal solution $k$ to \eqref{eq:P2} by setting 
\[
k(1)=k'(0)\quad \text{and}\quad k(i)=k(i-1)+k'(i-1), \quad i\in [n],
\]
or equivalently, 
\[
k(i)
=\sum_{j=0}^{i-1}k'(j)
=\#\{k\in [K]\colon i_k^-\le i-1\}-\#\{k\in [K]\colon i_k^+\le i-1\},\quad i\in [n].
\]
After defining $g(i)=\fdiff(i)+k(i)N, i\in [n]$, we can enter the proof of Proposition~\ref{pro:budget} to find the required optimal representation. 

More precisely, set $g(0)=g(n+1)=0$ and write $g=g_+-g_-$. For each $\ell\ge 1$ decompose the level sets of $g_+$ and $g_-$ into a disjoint union of maximal intervals as in
\[
\{g_+\ge \ell\}
=\bigcup_{j=1}^{M_{\ell}^+}[a_{\ell,j}^+,b_{\ell,j}^+]\quad\text{and}\quad
\{g_-\ge \ell\}
=\bigcup_{j=1}^{M_{\ell}^-}[a_{\ell,j}^-,b_{\ell,j}^-].
\]
Ignoring empty intervals, $m$ will be the number of distinct intervals $I_j$ of the form $[a_{\ell,j}^{\pm},b_{\ell,j}^{\pm}]$ appearing in  
\begin{equation} \label{eq:optimal-rep}
\sum_{\ell=1}^{\sup g_+}\sum_{j=1}^{M_{\ell}^+}\bsone_{[a_{\ell,j}^+,b_{\ell,j}^+]}
-\sum_{\ell=1}^{\sup g_-}\sum_{j=1}^{M_{\ell}^-}\bsone_{[a_{\ell,j}^-,b_{\ell,j}^-]}
\end{equation}
and for each interval $I_j$ we set $c_j$ to be the number of times it appears on the left-hand side minus the number of times it appears on the right-hand side in \eqref{eq:optimal-rep}. In this way, we obtain a solution to Problem~\eqref{eq:P1} with optimal value given by Theorem~\ref{thm:main}. 

An example is illustrated in Figure~\ref{fig:solution-example}. As can be seen from the previous Examples~\ref{ex:1} and \ref{ex:2}, the right-hand sum in formula~\ref{eq:formula} in Theorem~\ref{thm:main} compensates for big jumps of opposite sign, a fact which is represented in the choice of $k$ in Figure~\ref{fig:solution-example}.

\begin{figure}[h]
	\begin{center}
		\includegraphics[scale=1]{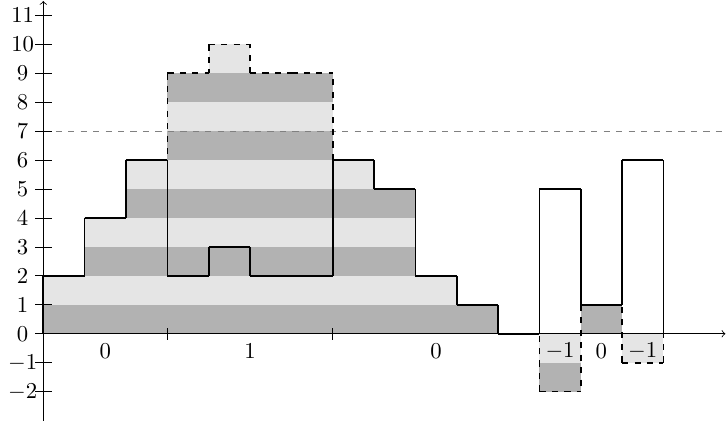}
	\end{center}
	\caption{\small{For $n=15$ and $N=7$ the combination $\fdiff=(2,4,6,2,3,2,2,6,5,2,1,0,5,1,6)$ is depicted using bold continuous lines. Below the horizontal axis the values of an optimal $k$ are indicated and in dashed lines the function $\fdiff+Nk$ is drawn. In a similar fashion as in Figure~\ref{fig:pro-example} the shaded blocks are used to indicate the optimal representation for the function $\fdiff+kN$ which attains the optimal value in \eqref{eq:P1}.  In this case we have }
	$
	\positive=\{5,5,4,2,2,2,1\}\text{, }\negative=\{-6,-4,-4,-3,-1,-1,-1,-1\}
	$
	\small{and $P$ consists of three pairs with total gain $7$. Thus, formula~\eqref{eq:formula} in Theorem~\ref{thm:main} gives $21-7=14$, which is exactly the number of shaded blocks.} 
	}
	\label{fig:solution-example}
\end{figure}

\begin{remark}
	The example in Figure~\ref{fig:solution-example} was generated using (pseudo-)random numbers. It would be interesting to study the minimal number of rotations, that is the quantity in formula~\eqref{eq:formula} in Theorem~\ref{thm:main}, for uniformly distributed random combinations. In the case $N=2$ this random quantity $R_{n,N}$ is related to the number of sign changes in a random bit string. Numerical experiments show that the expected value behaves as $\mathbb{E}R_{n,N}\approx \frac{N(n+1)}{8}$.
\end{remark}

\section{Variation of functions and discrepancy of points.}
\label{sec:variation}

Proposition~\ref{pro:budget} and its generalized Proposition~\ref{pro:budget-general} played essential roles in the proof of Theorem~\ref{thm:main}. In order to place the solved problem in a broader context, we shall present different concepts of variation of functions and statements which are related to Proposition~\ref{pro:budget}. 

The total variation of a real-valued function $f$ defined on some interval $[a,b]\subseteq\IR$ is
\[
V(f)
=\sup_{m\in\IN}\sup_{a\le x_1<\cdots<x_m\le b}\sum_{j=1}^{m-1}|f(x_{i+1})-f(x_{i})|,
\]
that is the supremum of the sum of absolute differences over any increasing sequence in $[a,b]$. Similar to the role of $C(f)$ in quantifying the necessary cost to represent a discrete function as a sum of indicators, the total variation $V(f)$ is a measure of the inherent complexity of $f$, for example with respect to noise reduction in images \cite{ROF92} or nonlinear approximation \cite[Sec. 3.2]{Dev98}.

To explain the relation between $C(f)$ and $V(f)$, let $g\colon [n]\to \ZZ$ and extend it to a piecewise constant function $g^{\rm ext}$ by setting it to zero on $\ZZ\setminus [n]$ and rounding to the next-largest integer. If we rescale by replacing $g^{\rm ext}$ with $g^{\rm ext}(n\cdot)$, then $g^{\rm ext}$ is supported in $[0,1]$, has a finite number of discontinuities and total variation with respect to any larger interval equal to
\begin{equation} \label{eq:variation}
V(g^{\rm ext})
=\sum_{i=0}^{n}|g^{\rm ext}\Big(\frac{i+1}{n}\Big)-g^{\rm ext}\Big(\frac{i}{n}\Big)|
=\sum_{i=0}^{n}|g(i+1)-g(i)|
=\sum_{i=0}^{n}|\Delta g(i)|.
\end{equation}
This is exactly twice the right-hand side of the statement of Proposition~\ref{pro:budget}, i.e., of 
\begin{equation} \label{eq:budget-var}
C(g)
=\frac{1}{2}\sum_{i=0}^{n}|\Delta g(i)|.
\end{equation}

Comparing with existing literature, this is not new as we explain in the following.

In \cite[p. 69]{Har10} Harman noted that a function $f\colon [0,1)\to\IR$ with at most a finite number of discontinuities and finite total variation $V(f)$ satisfies
\begin{equation} \label{eq:harman}
V^*(f)
=\frac{1}{2}V(f).
\end{equation}
Here, the quantity $V^*(f)$ is a one-dimensional instance of another concept of variation which Harman introduced already in his book \cite[p. 162]{Har98}. It is defined as the Riemann integral (whenever it exists)
\begin{equation} \label{eq:harman-def}
V^*(f)
=\int_{\inf f}^{\sup f}K(f,\alpha)\dd \alpha,
\end{equation}
where $K(f,\alpha)$ is the smallest number of intervals $I_1,\dots,I_r$ such that the superlevel set at level $\alpha\in\IR$ can be expressed as
\[
\{x\in [0,1)\colon f(x)\ge \alpha\}
=\Big(\bigcup_{j=1}^{m-1} I_j \Big)\setminus \Big(\bigcup_{j=m}^r I_j\Big).
\]

An inspection of the proof of Proposition~\ref{pro:budget-general} shows that the string of equalities in \eqref{eq:discrete-harman} amounts to a discrete version of \eqref{eq:harman}.  More precisely, the numbers
\[
M_{\ell}
=\#\{i\in\ZZ\colon f(i-1)<\ell\le f(i)\},\quad \ell\in\IN,
\]
are discrete counterparts of $K(f^{\rm ext},\alpha), \alpha\in\IR,$ and at least for nonnegative $f$ we have
\[
V^*(f^{\rm ext})
=\sum_{\ell=1}^{\sup f} M_{\ell}
\]
which is a discrete version of the integral in \eqref{eq:harman-def}.  A straightforward modification is necessary to accommodate also non-nonnegative $f$. 

Harman \cite{Har10} provides no formal proof of the equality \eqref{eq:harman} and states that it follows via a limit argument from the Banach indicatrix theorem which appeared in \cite{Ban25}. The latter states that equation \eqref{eq:harman} holds for continuous $f$ when $K(f,\alpha)$ is replaced by $\frac{1}{2}\#\{x\in [0,1)\colon f(x)=\alpha\}$, which is known as the Banach indicatrix at $\alpha\in\IR$.

Let us describe another result in this direction which is due to Pausinger and Svane \cite{PS15}. In Theorem 5.8 in \cite{PS15} the Banach indicatrix is used to prove that for $f\colon [0,1]\to\IR$ with $f(0)=f(1)=0$ then
\begin{equation} \label{eq:K-var-bound}
V_{\mathcal{K}}(f)
\le \frac{1}{2}V(f),
\end{equation}
where $V_{\mathcal{K}}(f)$ is the $\mathcal{K}$-variation of $f$. To define this concept, let $\mathcal{K}$ be the collection of convex subsets of $[0,1]$, that is, all the subintervals. Then for a simple function
\begin{equation} \label{eq:dvar-rep}
f=c_0\bsone_{[0,1]}+\sum_{j=1}^{m}c_j\bsone_{K_j},\quad \text{where}\quad c_j\in\IR\text{ and }K_j\in \mathcal{K}\setminus\big\{[0,1]\big\}\text{ for }j\in [m],
\end{equation}
the $\mathcal{K}$-variation is
\[
V_{\mathcal{K}}(f)
=\inf\Big\{\sum_{j=1}^{m}|c_j|\colon f=c_0\bsone_{[0,1]}+\sum_{j=1}^{m}c_j\bsone_{K_j}\text{ with }c_j,K_j \text{ as in \eqref{eq:dvar-rep}}\Big\}.
\]
This concept of variation closely resembles $C(f)$ as in \eqref{eq:budget} and in this way \eqref{eq:K-var-bound} corresponds to the upper bound in Proposition~\ref{pro:budget}. Note however that constants can be added to a function without changing its $\mathcal{K}$-variation. More precisely, the authors of \cite{PS15} define $\mathcal{D}$-variation for functions uniformly approximable by simple functions and for a general system $\mathcal{D}$ instead of $\mathcal{K}$. For example, one can take the system of axis-parallel boxes in the $d$-dimensional unit cube $[0,1]^d$. The related $\mathcal{G}$-variation was introduced in \cite{Kur97} to study neural networks and similar concepts appear in nonlinear approximation such as in \cite[Chapter 8]{Dev98}.

In fact, the Harman variation $V^*(f)$ from \cite{Har10} and the $\mathcal{K}$-variation $V_{\mathcal{K}}(f)$ from \cite{PS15} were introduced with the purpose of defining a suitable concept of variation encapsulating the complexity of a function with respect to numerical integration. In the remaining paragraphs, we want to give more details on this motivation.

In order to approximate the integral $\int_{[0,1]^d}f(x)\dd x$ of a function $f\colon [0,1]^d\to\IR$, it is convenient to use a quasi-Monte Carlo rule 
\[
\frac{1}{n}\sum_{j=1}^{n}f(x_j),\quad \text{where}\quad n\in\IN \text{ and }x_1,\dots,x_n\in [0,1]^d.
\]
The error of such a rule can be estimated via the Koksma-Hlawka inequality (see, e.g., Drmota and Tichy \cite[Theorem 1.14 and Remark 1]{DT97} and Kuipers and Niederreiter \cite[Chapter 2, Theorem 5.5]{KN74}) which states that 
\begin{equation} \label{eq:koksma-hlawka}
\Big|\frac{1}{n}\sum_{j=1}^{n}f(x_j)-\int_{[0,1]^d}f(x)\dd x\Big|
\le D(x_1,\dots,x_n)V_{\rm HK}(f),
\end{equation}
where the first factor 
\begin{equation} \label{eq:star-disc}
D(x_1,\dots,x_n)
=\sup_{R\in \mathcal{R}^*} \Big|\frac{1}{n}\sum_{j=1}^{n}\bsone_{R}(x_j)-\int_{[0,1]^d}\bsone_R(x)\dd x\Big|
\end{equation}
is the star-discrepancy of $\{x_1,\dots,x_n\}$. Here, $\mathcal{R}^*$ is the collection of all axis-parallel boxes in $[0,1]^d$ with one endpoint in $0$ and the star-discrepancy returns the maximal deviation over all $R\in\mathcal{R}^*$ between the fraction of points $x_1,\dots,x_n$ in $R$ from the volume of $R$. 

The second factor $V_{\rm HK}(f)$ is the Hardy-Krause variation of $f$, which coincides with the total variation on $[0,1]$ if $d=1$. If $d\ge 2$, it combines its variations in a complicated way along faces of cubes. Applied to a function $f$ the Koksma-Hlawka inequality \eqref{eq:koksma-hlawka} gives a finite bound if and only if $V_{\rm HK}(f)$ is bounded. For $d\ge 2$ this need not be the case even if $f$ is smooth, for example when its support is a ball. This defect motivated the definition of the Harman variation and the subsequent $\mathcal{D}$-variation as multivariate concepts of variation. 

\begin{remark}
The concepts of multi-dimensional variations introduced above motivate a multivariate extension of our original problem. Namely, one could consider a combination lock with ``dials'' placed on an $n$ by $n$ grid where dials can be ``rotated'' simultaneously if they form a rectangle or a convex set. Also, it is possible to place the dials on a circle and obtain a periodic version of our original problem which is then related to a periodic notion of variation. 
\end{remark}

On a related note, Aistleitner et al. in \cite{APS+17} have shown that $V_{\rm HK}(f)$ coincides with $V_{\mathcal{R}^*}(f)$, which can be seen as a generalization of the equivalence between $V(f)$ and $V_{\mathcal{K}}(f)$ in dimension one. Further, Harman \cite{Har10} gave an extension of the Koksma-Hlawka inequality \eqref{eq:koksma-hlawka} by replacing $V_{\rm HK}(f)$ with $V^*(f)$ and the star-discrepancy by the so called isotropic discrepancy which is just \eqref{eq:star-disc} with $\mathcal{R}^*$ replaced by the system $\mathcal{K}$ of convex sets contained in $[0,1]^d$, see also Zaremba \cite{Zar70}. Interested readers are invited to enter the domain of discrepancy theory via the illustrative book \cite{Mat99} by Matou\v sek.

\subsection*{Acknowledgment.}
	The author wishes to acknowledge the support of the Austrian Science Fund (FWF) through Project F5513-N26, which was a part of the Special Research Program \textit{Quasi-Monte Carlo Methods: Theory and Applications}, and Project P32405 \textit{Asymptotic geometric analysis and applications}.

\end{document}